\documentclass[11pt]{amsart}
\usepackage{amsmath,amssymb,color,cite,verbatim}

\numberwithin{equation}{section}

\newtheorem{thm}[equation]{Theorem}
\newtheorem{prop}[equation]{Proposition}
\newtheorem{lemma}[equation]{Lemma}

\theoremstyle{definition}
\newtheorem*{rmk}{Remark}

\newtheorem{defn}[equation]{Definition}

\newcommand{\F}{\mathbb{F}}
\newcommand{\bP}{\mathbb{P}}

\newcommand{\Z}{\mathbb{Z}}

\newcommand{\mybar}[1]{#1\llap{$\overline{\phantom{\rm#1}}$}}

\usepackage[colorlinks,pdftex,pagebackref, bookmarks=false]{hyperref}

\begin{document}

\title{A note on the paper arXiv:2112.14547}

\author{Michael E. Zieve}
\address{
  Department of Mathematics,
  University of Michigan,
  530 Church Street,
  Ann Arbor, MI 48109-1043 USA
}
\email{zieve@umich.edu}
\urladdr{http://www.math.lsa.umich.edu/$\sim$zieve/}

\date{\today}

\begin{abstract}
We give historical remarks related to  arXiv:2112.14547.  In particular, we show that the ``new" permutation polynomials in that paper are actually well known.  In addition we give a simpler derivation of these permutation polynomials than had been given previously, which demonstrates the general method of producing permutation polynomials that was introduced in arXiv:1310.0776.
\end{abstract}

\maketitle


\section{Introduction} 

A polynomial $f(X)\in\F_q[X]$ is called a \emph{permutation polynomial} if the function $c\mapsto f(c)$ permutes $\F_q$.  The recent paper \cite{GWSZL} purports to provide new classes of permutation polynomials.  Here we show that these permutation polynomials are in fact well-known.\footnote{Shortly after \cite{GWSZL1} (the first version of \cite{GWSZL}) appeared on the arXiv, I emailed the content of this note to the third and fifth authors of that paper (I cannot find email addresses for the other authors), and suggested that \cite{GWSZL1} should be revised in light of the content of this note.  Since I did not receive a reply, and no new version of \cite{GWSZL1} was posted in the following days, I posted the first version of this note to the arXiv on 04 Jan 2022, in order to help members of the permutation polynomials community avoid spending further time and effort on rediscovering known results.  The updated version \cite{GWSZL} of \cite{GWSZL1} was posted to the arXiv on 29 Apr 2022, but it did not incorporate the information from the present note or from my emails, although it did remove the claim from \cite{GWSZL1} that that paper was the first to prove three conjectures which had in fact been resolved long ago.}  We also give a new proof of the main result of \cite{GWSZL}, which is significantly simpler and more direct than all previous proofs, and which demonstrates the general method of producing permutation polynomials that was introduced in \cite{ZR}.

The main result of \cite{GWSZL} is as follows.

\begin{thm}\label{them}
Let $k,\ell,m$ be positive integers with $\ell\ne m$, and write $q:=2^k$, $Q:=2^\ell$, and $R:=2^m$.
Let $u$ be an integer, and let $d_1,d_2,d_3$ be positive integers such that
\begin{align*}
d_1 &\equiv Q-R+u(q+1)\pmod{q^2-1} \\
d_2 &\equiv Q+R+(u-R)(q+1)\pmod{q^2-1} \\
d_3&\equiv -(Q+R)+(u+Q)(q+1)\pmod{q^2-1}.
\end{align*}
If $\gcd(d_1,q^2-1)=1$ then $f(X):=X^{d_1}+X^{d_2}+X^{d_3}$ permutes\/ $\F_{q^2}$.
\end{thm}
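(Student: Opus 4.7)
The plan is to invoke the main lemma of~\cite{ZR}: a polynomial of the form $X^{r}h(X^{n}) \in \F_{q'}[X]$ with $n\mid q'-1$ permutes $\F_{q'}$ if and only if $\gcd(r,n)=1$ and the map $x\mapsto x^{r}h(x)^{n}$ permutes $\mu_{(q'-1)/n}$, the group of $(q'-1)/n$-th roots of unity in $\F_{q'}$.

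To put $f$ into this form, I would subtract the congruence for $d_1$ from those for $d_2$ and $d_3$, obtaining $d_2\equiv d_1-R(q-1)$ and $d_3\equiv d_1+Q(q-1)$ modulo $q^{2}-1$. Thus on $\F_{q^{2}}^{\ast}$,
\[
f(X)=X^{d_{2}}\bigl(1+X^{R(q-1)}+X^{(Q+R)(q-1)}\bigr)=X^{d_{2}}h(X^{q-1}),
\]
where $h(Y):=1+Y^{R}+Y^{Q+R}$. Taking $q'=q^{2}$ and $n=q-1$ (so $(q'-1)/n=q+1$), the lemma reduces the theorem to two conditions: $\gcd(d_{2},q-1)=1$, and $\varphi(x):=x^{d_{2}}h(x)^{q-1}$ permutes $\mu_{q+1}$. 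The gcd condition is immediate from $d_{2}\equiv d_{1}\pmod{q-1}$ and $\gcd(d_{1},q^{2}-1)=1$.

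Next I would simplify $\varphi$ on $\mu_{q+1}$ using $x^{q}=x^{-1}$. Applying Frobenius to $h(x)$ gives
\[
h(x)^{q}=1+x^{-R}+x^{-(Q+R)}=x^{-(Q+R)}\bigl(1+x^{Q}+x^{Q+R}\bigr),
\]
and $d_{2}\equiv Q+R\pmod{q+1}$ gives $x^{d_{2}}=x^{Q+R}$ on $\mu_{q+1}$. Combining these yields
\[
\varphi(x)=\frac{1+x^{Q}+x^{Q+R}}{1+x^{R}+x^{Q+R}}=:g(x),
\]
and a Frobenius check confirms $g(x)^{q+1}=1$, so $g$ sends $\mu_{q+1}$ to itself wherever the denominator is nonzero.

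The main obstacle is showing that $g$ is a bijection of $\mu_{q+1}$. Writing $N:=1+X^{Q}+X^{Q+R}$ and $D:=1+X^{R}+X^{Q+R}$, I would assume $g(x)=g(y)$ for $x,y\in\mu_{q+1}$, clear denominators to get the identity $N(x)D(y)+N(y)D(x)=0$ in characteristic~$2$, and attempt to force $x=y$. Three tools will be crucial: the characteristic-$2$ freshman's dream $1+X^{2^{j}}=(1+X)^{2^{j}}$, giving the compact rewritings $N(X)=(1+X)^{Q+R}+X^{R}$ and $D(X)=(1+X)^{Q+R}+X^{Q}$; the Frobenius identity $D(x)=x^{Q+R}N(x)^{q}$ on $\mu_{q+1}$, which also yields the compact expression $g(x)=x^{-(Q+R)}N(x)^{1-q}$; and the coprimality $\gcd(Q-R,q+1)=1$ coming from $\gcd(d_{1},q+1)=1$. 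This injectivity step is the delicate heart of the argument, because $g$ has degree $Q+R$ as a self-map of $\bP^{1}$, so its restriction to $\mu_{q+1}$ being a bijection is a nontrivial consequence of the specific arithmetic structure, and the coprimality conditions together with the characteristic-$2$ identities must conspire to rule out any nontrivial coincidence $g(x)=g(y)$ with $x\neq y$ in $\mu_{q+1}$.
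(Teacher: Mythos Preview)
Your reduction is correct and matches the paper's: you rewrite $f(X)\equiv X^{d_2}A(X^{q-1})$ with $A(Y)=1+Y^R+Y^{Q+R}$, apply the standard lemma from \cite{Zlem}, and arrive at the rational function
\[
g(X)=\frac{X^{Q+R}+X^Q+1}{X^{Q+R}+X^R+1}
\]
on $\mu_{q+1}$, exactly as the paper does.

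However, there is a genuine gap at the step you yourself flag as ``the delicate heart of the argument.'' You propose to show injectivity of $g$ on $\mu_{q+1}$ by clearing denominators in $g(x)=g(y)$ and using the identities $N(X)=(1+X)^{Q+R}+X^R$, $D(X)=(1+X)^{Q+R}+X^Q$, together with $\gcd(Q-R,q+1)=1$. But you do not actually carry this out, and it is not at all clear that the resulting polynomial identity in two variables can be forced to imply $x=y$ by those tools alone. Indeed, this is precisely the kind of ``non-conceptual computation'' that earlier proofs relied on and that the paper's approach is designed to bypass. You also need to show $D$ has no zeros on $\mu_{q+1}$ before $g$ is even a well-defined self-map; this is not hard (if $A(\alpha)=0$ then also $\alpha^{Q+R}A(1/\alpha)=0$, subtracting gives $\alpha^{Q-R}=1$, hence $\alpha=1$, contradiction), but you omit it.

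The key idea you are missing is that $g$ is, up to composition with $X^{\pm 1}$, a \emph{conjugate of a monomial by a degree-one map}. Concretely, with $\rho(X)=(X+\omega)/(\omega X+1)$ for a primitive cube root $\omega$, one has
\[
X^{(-1)^m}\circ g(X)=\rho\circ X^{R-Q}\circ\rho\quad\text{or}\quad \rho\circ X^{R+Q}\circ\rho,
\]
according as $\ell\equiv m$ or $\ell\not\equiv m\pmod 2$. Since $\rho$ either permutes $\mu_{q+1}$ (when $k$ is even) or interchanges $\mu_{q+1}$ with $\bP^1(\F_q)$ (when $k$ is odd), bijectivity of $g$ on $\mu_{q+1}$ reduces to a coprimality condition on the exponent $R\pm Q$, which a short parity check shows is always satisfied under the hypothesis $\gcd(Q-R,q+1)=1$. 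This structural decomposition is what makes the proof go through cleanly; your direct-injectivity plan, as stated, does not.
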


\begin{rmk}
The above statement includes all permutation polynomials that can be inferred from any interpretation of \cite[Thm.~3.1]{GWSZL}.  The statement of the latter result does not require the $d_i$ to be positive, and does not say that its expressions for the $d_i$'s should be interpreted as congruences mod ($q^2-1$).  However, we assume that the authors of \cite{GWSZL} intended to state their result as above.  The positivity condition is needed in order to make their result be true (and indeed, negative $d_i$'s would not yield polynomials), and after imposing positivity then the congruences becomes natural, since such congruences do not affect whether $f(X)$ permutes $\F_{q^2}$.
\end{rmk}

We will use the following definition.

\begin{defn}
We say that polynomials $f,g\in\F_q[X]$ are \emph{multiplicatively equivalent} if $f(X)\equiv \alpha g(\beta X^n)\pmod{X^q-X}$ for some $\alpha,\beta\in\F_q^*$ and some positive integer $n$ such that $\gcd(n,q-1)$.
\end{defn}

The following properties of multiplicative equivalence are immediate:

\begin{enumerate}
\item Multiplicative equivalence is an equivalence relation on $\F_q[X]$.
\item If $f,g\in\F_q[X]$ are multiplicatively equivalent then $f(X)$ permutes $\F_q$ if and only if $g(X)$ permutes $\F_q$.
\item If $f(X)$ and $g(X)$ are multiplicatively equivalent and $\deg(f)<q$ then $f(X)$ has at most as many terms as does $g(X)$.
\end{enumerate}

In light of the above properties, multiplicative equivalence is a natural equivalence relation to use when deciding whether one permutation polynomial with few terms is essentially the same as another.

\begin{rmk}
What we call multiplicative equivalence has been called ``quasi-multiplicative equivalence" in previous papers.  The term ``multiplicative equivalence" has been defined previously to mean two different things, neither of which is equivalent to the above definition.  However, we suggest that the above definition should be used in the future, for the betterment of the subject -- for instance, the use of the previous definitions has led authors to spend time and effort producing permutation polynomials that could have been obtained immediately from previously known permutation polynomials by composing with $\beta X$.
\end{rmk}

We will show that the permutation polynomials in Theorem~\ref{them}
have appeared in the following previous results (listed according to the order in which the relevant papers were posted in the public domain):

\begin{enumerate}
\item Some instances of the permutation polynomials in Theorem~\ref{them} are special cases of the much more general classes of permutation polynomials in \cite[Thm.~1.1 and 1.2]{ZR}; however, the paper \cite{ZR} does not draw attention to the relevant special cases.
\item If $\ell$ is odd and $m$ is even then every permutation polynomial in Theorem~\ref{them} appears in one of \cite[Cor.~3.8, 3.9, 3.12, and 3.13]{WYDM}.  If $\ell$ is even and $m$ is odd then every permutation polynomial in Theorem~\ref{them} is multiplicatively equivalent to a permutation polynomial in one of \cite[Cor.~3.8, 3.9, 3.12, and 3.13]{WYDM}.
\item Every permutation polynomial in Theorem~\ref{them} is multiplicatively equivalent to a permutation polynomial in \cite[Thm.~1]{LHnewtri}.
\item Every permutation polynomial in Theorem~\ref{them} is multiplicatively equivalent to a permutation polynomial in \cite[Thm.~4.2]{BQ}.
\item Every permutation polynomial in Theorem~\ref{them} is multiplicatively equivalent to a permutation polynomial in \cite[Thm.~3.2]{ZKP}.
\item If $k$ is odd and $\gcd(2k,\ell-m)=1$ then every permutation polynomial in Theorem~\ref{them} is multiplicatively equivalent to a permutation polynomial in \cite[Thm.~1]{LXZ}.
\item Every permutation polynomial in Theorem~\ref{them} is multiplicatively equivalent to a permutation polynomial in \cite[Thm.~1.1]{ZLKPT}.
\end{enumerate}

In addition, \cite{GWSZL1} (the first version of \cite{GWSZL}) purports to be the first paper to resolve three conjectures from the literature.  However, in fact those conjectures were proved previously.  Specifically, the two conjectures from \cite{TZHL} were first resolved as parts (a) and (b) of \cite[Cor.~1.4]{ZR}, and Conjecture~2 of \cite{GS} was first resolved in \cite[Thm.~4.2]{WYDM}.

In the next section we give a very short and simple proof of Theorem~\ref{them}, based on the arguments in \cite{ZR}, which pinpoints the key reason why $f(X)$ permutes $\F_{q^2}$.  This proof avoids the non-conceptual computations occurring in all previous proofs of Theorem~\ref{them}. It turns out that the same approach can be used to deduce all the results mentioned above, in addition to dozens of other results from the literature and arbitrarily many as-yet unpublished results.  We encourage readers interested in permutation polynomials to look into \cite{ZR}, so that they can focus their attention and time on producing results which do not follow immediately from the arguments in that paper.
We conclude this note in Section~3 by explaining how Theorem~\ref{them} connects with previous results.


\section{Proof of Theorem~\ref{them}}

In this section we give a new proof of Theorem~\ref{them}.  We write $\mu_{q+1}$ for the set of $(q+1)$-th roots of unity in $\F_{q^2}$, and if $A(X)\in\F_{q^2}[X]$ then we write $A^{(q)}(X)$ for the polynomial obtained from $A(X)$ by raising all coefficients to the $q$-th power.
We first restate the condition for certain polynomials to permute $\F_{q^2}$ in terms of whether an associated polynomial permutes $\mu_{q+1}$, via the following special case of an easy and  much-used lemma from \cite{Zlem}.

\begin{lemma}\label{old}
Write $f(X):=X^r A(X^{q-1})$ where $r$ is a positive integer, $q$ is a prime power, and $A(X)\in\F_{q^2}[X]$.  Then $f(X)$ permutes\/ $\F_{q^2}$ if and only if $\gcd(r,q-1)=1$ and $g_0(X):=X^r A(X)^{q-1}$ permutes $\mu_{q+1}$.
\end{lemma}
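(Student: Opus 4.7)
The plan is to exploit the coset decomposition of $\F_{q^2}^*$ under the surjective homomorphism $\pi\colon c\mapsto c^{q-1}$ from $\F_{q^2}^*$ onto $\mu_{q+1}$, whose kernel is $\F_q^*$. This partitions $\F_{q^2}^*$ into $q+1$ cosets of $\F_q^*$, each labeled by a unique element of $\mu_{q+1}$, and $f$ is equivariant for the $\F_q^*$-action: since $a^{q-1}=1$ for $a\in\F_q^*$, one computes $f(ac)=(ac)^r A((ac)^{q-1})=a^r f(c)$. Hence $f$ permutes $\F_{q^2}^*$ precisely when it permutes the $q+1$ cosets and also restricts to a bijection on each coset.

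Before working on the cosets I would dispose of the value $0$: since $r>0$ we have $f(0)=0$, and for $c\in\F_{q^2}^*$ we have $f(c)=0$ iff $A(c^{q-1})=0$, i.e., iff $A$ has a root in $\mu_{q+1}$. The latter would force $g_0$ to take the value $0\notin\mu_{q+1}$, so it is ruled out whenever $g_0$ permutes $\mu_{q+1}$; thus I may assume $A$ has no roots in $\mu_{q+1}$, which is also exactly what is needed for $g_0$ to map $\mu_{q+1}$ into itself.

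Next I would compute the induced map on cosets. Applying $\pi$ to $f(c)=c^r A(c^{q-1})$ gives
\[
f(c)^{q-1}=(c^{q-1})^r A(c^{q-1})^{q-1}=g_0(c^{q-1}),
\]
so the map $\mu_{q+1}\to\mu_{q+1}$ induced by $f$ on coset labels is precisely $g_0$. Separately, within any single coset $c\F_q^*$, equivariance shows that $f$ acts as $ac\mapsto a^r f(c)$, which is a bijection onto $f(c)\F_q^*$ iff $a\mapsto a^r$ permutes $\F_q^*$, i.e., iff $\gcd(r,q-1)=1$.

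Combining these two observations yields the equivalence: $f$ permutes $\F_{q^2}$ iff $g_0$ permutes $\mu_{q+1}$ and $\gcd(r,q-1)=1$. There is no real obstacle in this proof -- everything reduces to reading off the orbit structure of $f$ from its behavior on the two factors $\F_q^*$ and $\mu_{q+1}$ of $\F_{q^2}^*$ -- the only mild subtlety is ensuring that a root of $A$ in $\mu_{q+1}$ is inconsistent with both sides of the equivalence, which is what the computation above shows.
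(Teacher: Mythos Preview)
Your argument is correct and is precisely the standard proof of this well-known lemma. The paper itself does not give a proof but simply cites \cite{Zlem} for this result, so there is no ``paper's own proof'' to compare against; your coset-decomposition argument via the homomorphism $c\mapsto c^{q-1}$ is exactly the approach used in that reference.
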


We next translate the condition that $g_0(X)$ permutes $\mu_{q+1}$ into the condition that an associated rational function $g(X)$ permutes $\mu_{q+1}$, where typically $g(X)$ has much lower degree than does $g_0(X)$.  We do this in the following trivial lemma, which encodes a procedure introduced in \cite{ZR}.

\begin{lemma}\label{rewrite}
Write $g_0(X):=X^r A(X)^{q-1}$ where $r$ is an integer, $q$ is a prime power, and $A(X)\in\F_{q^2}[X]$.  Then $g_0(X)$ maps $\mu_{q+1}$ into $\mu_{q+1}\cup\{0\}$, and if $A(X)$ has no roots in $\mu_{q+1}$ then $g_0(X)$ induces the same function on $\mu_{q+1}$ as does $g(X):=X^s A^{(q)}(1/X)/A(X)$ for any integer $s$ with $s\equiv r\pmod{q+1}$.  In particular, $g_0(X)$ permutes $\mu_{q+1}$ if and only if $g(X)$ permutes $\mu_{q+1}$ and $A(X)$ has no roots in $\mu_{q+1}$.
\end{lemma}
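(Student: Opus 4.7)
The plan is to exploit the key identity $\zeta^q = 1/\zeta$ for every $\zeta\in\mu_{q+1}$, which is immediate from $\zeta^{q+1}=1$. Combined with the fact that the $q$-th power Frobenius is a ring homomorphism on $\F_{q^2}$, this gives the basic computation
\[
A(\zeta)^q \;=\; A^{(q)}(\zeta^q) \;=\; A^{(q)}(1/\zeta),
\]
which is the only nontrivial ingredient in the entire proof.

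First I would verify that $g_0$ maps $\mu_{q+1}$ into $\mu_{q+1}\cup\{0\}$. For $\zeta\in\mu_{q+1}$, either $A(\zeta)=0$, in which case $g_0(\zeta)=0$, or $A(\zeta)\in\F_{q^2}^*$, in which case
\[
g_0(\zeta)^{q+1} \;=\; \zeta^{r(q+1)}\,A(\zeta)^{(q-1)(q+1)} \;=\; 1\cdot A(\zeta)^{q^2-1} \;=\; 1,
\]
so $g_0(\zeta)\in\mu_{q+1}$.

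Next, assuming $A$ has no roots in $\mu_{q+1}$, I would show $g_0$ and $g$ induce the same function on $\mu_{q+1}$. For $\zeta\in\mu_{q+1}$, the displayed identity above yields $A(\zeta)^{q-1}=A(\zeta)^q/A(\zeta)=A^{(q)}(1/\zeta)/A(\zeta)$. Since $\zeta^{q+1}=1$ and $s\equiv r\pmod{q+1}$, also $\zeta^s=\zeta^r$. Multiplying, $g_0(\zeta)=\zeta^r A(\zeta)^{q-1}=\zeta^s A^{(q)}(1/\zeta)/A(\zeta)=g(\zeta)$.

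For the final ``in particular'' assertion: if $A$ has a root $\zeta\in\mu_{q+1}$ then $g_0(\zeta)=0\notin\mu_{q+1}$, so $g_0$ fails to map $\mu_{q+1}$ to itself and in particular does not permute it; if $A$ has no such root then $g_0$ and $g$ define the same self-map of $\mu_{q+1}$, so one permutes $\mu_{q+1}$ precisely when the other does. There is no real obstacle here; the lemma is genuinely trivial once the identity $\zeta^q=1/\zeta$ is in hand, and the only care needed is in tracking the congruence $s\equiv r\pmod{q+1}$ and handling the possibility $A(\zeta)=0$ separately.
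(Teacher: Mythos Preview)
Your proof is correct and is exactly the direct verification the paper has in mind; the paper in fact offers no proof at all, merely calling the lemma ``trivial'' and citing the procedure from \cite{ZR}. Your use of the Frobenius identity $A(\zeta)^q=A^{(q)}(\zeta^q)=A^{(q)}(1/\zeta)$ for $\zeta\in\mu_{q+1}$ is precisely the one-line computation that underlies the statement.
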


A key ingredient in \cite{ZR} is degree-one rational functions which map $\mu_{q+1}$ to either $\mu_{q+1}$ or $\bP^1(\F_q):=\F_q\cup\{\infty\}$.  In this note we use
\[
\rho(X):=\frac{X+\omega}{\omega X+1}
\]
where $\omega$ is a prescribed order-$3$ element of $\F_{q^2}^*$, with $q$ being a power of $2$. 
The following result is a special case of \cite[Lemmas~2.1 and 3.1]{ZR}, and also is easy to verify directly.

\begin{lemma}\label{rho}
Let $q:=2^k$ where $k>0$.  If $k$ is even then $\rho(X)$ permutes $\mu_{q+1}$, and if $k$ is odd then $\rho(X)$ interchanges $\mu_{q+1}$ and\/ $\bP^1(\F_q)$.
\end{lemma}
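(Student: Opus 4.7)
My plan is to verify the lemma by direct computation, exploiting the fact that $\rho$ is a Möbius transformation on $\bP^1(\F_{q^2})$ and that the two target sets admit clean Galois-theoretic descriptions: $x\in\mu_{q+1}$ if and only if $x^q=1/x$, while $y\in\bP^1(\F_q)$ if and only if $y^q=y$.

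The first step is to pin down $\omega^q$. Since $\omega$ has order $3$ in characteristic $2$, it satisfies $\omega^2+\omega+1=0$, so in particular $\omega+\omega^2=1$. The element $\omega$ lies in $\F_q=\F_{2^k}$ exactly when $3\mid q-1$, i.e., exactly when $k$ is even; hence $\omega^q=\omega$ for $k$ even and $\omega^q=\omega^2$ for $k$ odd.

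The main computation is to evaluate $\rho(x)^q$ for $x\in\mu_{q+1}$. Substituting $x^q=1/x$ gives
\[
\rho(x)^q = \frac{x^q+\omega^q}{\omega^q x^q+1} = \frac{1+\omega^q x}{\omega^q+x}.
\]
When $k$ is even, this is $(\omega x+1)/(x+\omega)=1/\rho(x)$, so $\rho(x)^{q+1}=1$ and $\rho(x)\in\mu_{q+1}$. When $k$ is odd, this is $(\omega^2 x+1)/(x+\omega^2)$, and to conclude $\rho(x)\in\bP^1(\F_q)$ I would show $\rho(x)^q=\rho(x)$; cross-multiplying reduces the desired equality to $(x+\omega)(x+\omega^2)=(\omega x+1)(\omega^2 x+1)$, and both sides expand to $x^2+x+1$ using $\omega^3=1$ and $\omega+\omega^2=1$.

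To finish, I would note that $\rho$ has unique pole $\omega^2$, which lies in neither $\mu_{q+1}$ (when $k$ is even) nor $\bP^1(\F_q)$ (when $k$ is odd), and that injectivity of $\rho$ on $\bP^1(\F_{q^2})$ is automatic. Thus in the $k$-even case the image of $\mu_{q+1}$ under $\rho$ is a $(q+1)$-element subset of $\mu_{q+1}$, so $\rho$ permutes $\mu_{q+1}$. In the $k$-odd case, an entirely parallel computation starting from $y^q=y$ shows $\rho$ maps $\bP^1(\F_q)$ into $\mu_{q+1}$; since $|\bP^1(\F_q)|=q+1=|\mu_{q+1}|$, the map interchanges the two sets. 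There is no real obstacle: the whole argument rests on the observation that the identity $\omega^2+\omega+1=0$ in characteristic $2$ is exactly what makes the cross-multiplication in the odd-$k$ case collapse.
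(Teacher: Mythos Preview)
Your argument is correct and matches the paper's own direct verification: compute $\rho(x)^q$ using $x^q=1/x$ together with $\omega^q\in\{\omega,\omega^2\}$, then invoke injectivity of the degree-one map and a cardinality count. The only imprecision is that when $k$ is odd the pole $\omega^2$ actually does lie in $\mu_{q+1}$, so to obtain the full ``interchanges'' conclusion you should note $\rho(\omega^2)=\infty\in\bP^1(\F_q)$ (or simply observe that $\rho$ is an involution); this is immediate.
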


%
%

Pick any nonconstant $h(X)\in\mybar\F_q(X)$.  In light of Lemma~\ref{rho}, if $k$ is even then $\rho\circ h\circ\rho$ permutes $\mu_{q+1}$ if and only if $h(X)$ permutes $\mu_{q+1}$, and if $k$ is odd then $\rho\circ h\circ\rho$ permutes $\mu_{q+1}$ if and only if $h(X)$ permutes $\bP^1(\F_q)$.  We will show that the permutation polynomials $f(X)$ in Theorem~\ref{them} correspond to rational functions $g(X)$ permuting $\mu_{q+1}$ (via Lemmas~\ref{old} and \ref{rewrite}) where $g(X)=X^i\circ\rho\circ X^{R+jQ}\circ\rho$ with $i,j\in\{1,-1\}$.  The following result presents these compositions in the cases we need; it can be verified by a routine computation.

\begin{lemma}\label{Q1} 
Let $k,\ell,m$ be positive integers with $\ell\ne m$, and write $q:=2^k$, $Q:=2^\ell$, and $R:=2^m$. Then
\[
X^{(-1)^m}\circ\frac{X^{Q+R}+X^Q+1}{X^{Q+R}+X^R+1}
=\begin{cases}
\rho\circ X^{R-Q}\circ\rho &\text{ if $\ell\equiv m\pmod{2}$} \\
\rho\circ X^{R+Q}\circ\rho &\text{ if $\ell\not\equiv m\pmod{2}$.}
\end{cases}
\]
\end{lemma}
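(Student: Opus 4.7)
The plan is to compute the right-hand side directly and compare to the left. First I would unfold the composition: substituting $Y=\rho(X)=(X+\omega)/(\omega X+1)$ into $\rho(Y^n)$ and clearing the inner fraction gives
\[
\rho\circ X^n\circ\rho(X)\;=\;\frac{(X+\omega)^n+\omega(\omega X+1)^n}{\omega(X+\omega)^n+(\omega X+1)^n},
\]
which I would use with $n\in\{R-Q,\,R+Q\}$, interpreting negative exponents as reciprocals of rational functions.

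Next, since $q$ is a power of $2$, Frobenius gives $(X+\omega)^{2^j}=X^{2^j}+\omega^{2^j}$ and $(\omega X+1)^{2^j}=\omega^{2^j}X^{2^j}+1$. Because $\omega$ has order $3$ and $2^j\bmod 3$ alternates with $j$, the value $\omega^Q\in\{\omega,\omega^2\}$ depends only on the parity of $\ell$, and similarly for $\omega^R$. Thus $(X+\omega)^{R\pm Q}$ and $(\omega X+1)^{R\pm Q}$ are products (or, in the subtractive case, quotients) of two binomials of the shape $X^R+\omega^{\epsilon_m}$ and $X^Q+\omega^{\epsilon_\ell}$ (and their companions from $\omega X+1$), so after clearing denominators both the numerator and the denominator of the displayed formula reduce to polynomials supported only on the monomials $X^{Q+R}$, $X^R$, $X^Q$, and $1$, with coefficients in $\{1,\omega,\omega^2\}$.

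Finally, I would collect coefficients in each of the four cases determined by the parities of $\ell$ and $m$. The characteristic-$2$ relation $1+\omega+\omega^2=0$ (equivalently $\omega^2=\omega+1$) makes terms of the form $2\omega^j X^*$ vanish and causes sums such as $\omega+\omega^2$ to collapse to $1$; after this, a common scalar factor from $\{1,\omega,\omega^2\}$ cancels between numerator and denominator, leaving either $T(X):=(X^{Q+R}+X^Q+1)/(X^{Q+R}+X^R+1)$ or its reciprocal $T(X)^{-1}$. The reciprocal turns out to appear precisely when $m$ is odd, exactly matching the factor $X^{(-1)^m}$ on the left; meanwhile the same-parity cases produce $X^{R-Q}$ and the opposite-parity cases produce $X^{R+Q}$ as the central exponent, as claimed. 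The only real obstacle is keeping the four parity cases straight during this bookkeeping; once the Frobenius substitutions and the relation $1+\omega+\omega^2=0$ are in hand, no step requires any idea beyond careful tracking of coefficients.
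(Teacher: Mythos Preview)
Your proposal is correct and is precisely the ``routine computation'' the paper has in mind; the paper does not give a written proof of this lemma but simply asserts that it can be verified directly, and your outline (expand $\rho\circ X^n\circ\rho$ as $\frac{(X+\omega)^n+\omega(\omega X+1)^n}{\omega(X+\omega)^n+(\omega X+1)^n}$, apply Frobenius, and simplify using $1+\omega+\omega^2=0$ in the four parity cases) is exactly that verification.
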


Now we prove Theorem~\ref{them}.

\begin{proof}[Proof of Theorem~\ref{them}]
Note that $d_1\equiv d_2+R(q-1)\pmod{q^2-1}$ and $d_3\equiv d_2+(Q+R)(q-1)\pmod{q^2-1}$.
Thus $d_2\equiv d_1\pmod{q-1}$, so the hypothesis $\gcd(d_1,q-1)=1$ implies that $\gcd(d_2,q-1)=1$.
By Lemma~\ref{old} and Lemma~\ref{rewrite}, it suffices to show that $A(X):=X^R+1+X^{Q+R}$ has no roots in $\mu_{q+1}$ and $g(X)$ permutes $\mu_{q+1}$, where
\[
g(X):=X^{Q+R} \frac{A^{(q)}(1/X)}{A(X)}=\frac{X^{Q+R}+X^Q+1}{X^{Q+R}+X^R+1}.
\]

We first show that $A(X)$ has no roots in $\mu_{q+1}$.  Suppose to the contrary that $\alpha\in\mu_{q+1}$ satisfies $A(\alpha)=0$.  Then also
\[
0=\alpha^{Q+R} A(\alpha)^q = \alpha^{Q+R}A(\alpha^q) =
\alpha^{Q+R}A\bigl(\frac1{\alpha}\bigr)=\alpha^Q+\alpha^{Q+R}+1.
\]
Thus $0=A(\alpha)+\alpha^{Q+R}A(\alpha)^q=\alpha^R+\alpha^Q$, so $\alpha^{Q-R}=1$.  Since $\gcd(Q-R,q+1)=1$, it follows that $\alpha=1$; but plainly $A(1)=1\ne 0$, contradiction.

It remains to show that $g(X)$ permutes $\mu_{q+1}$. 
First suppose $\ell\equiv m\pmod{2}$.  Then the hypothesis $\gcd(Q-R,q+1)=1$ implies that $k$ is even, so that $X^{R-Q}$ permutes $\mu_{q+1}$ and also $\rho(X)$ permutes $\mu_{q+1}$ by Lemma~\ref{rho}.  Thus Lemma~\ref{Q1} implies that $g(X)$ permutes $\mu_{q+1}$

Now suppose $\ell\not\equiv m\pmod{2}$.  If $k$ is odd then $\rho(X)$ interchanges $\mu_{q+1}$ and $\bP^1(\F_q)$, and we have $\gcd(Q+R,q-1)=1$ so that $X^{Q+R}$ permutes $\bP^1(\F_q)$, whence $g(X)$ permutes $\mu_{q+1}$ by Lemma~\ref{Q1}.  Finally, if $k$ is even then $\rho(X)$ permutes $\mu_{q+1}$, and since $k\not \equiv \ell-m\pmod{2}$ we have $\gcd(Q+R,q+1)=1$, so that $X^{Q+R}$ permutes $\mu_{q+1}$, whence again $g(X)$ permutes $\mu_{q+1}$.
\end{proof}

\begin{rmk}
The method used in the above proof can be used to produce enormous collections of permutation polynomials over $\F_{q^2}$, for any prime power $q$.  One can start with any rational function $h(X)\in\F_q(X)$ which permutes $\bP^1(\F_q)$, and any degree-one $\rho,\eta\in\F_{q^2}(X)$ such that $\rho(\mu_{q+1})=\bP^1(\F_q)$ and $\eta(\bP^1(\F_q))=\mu_{q+1}$, in order to obtain a rational function $g(X):=\eta\circ h\circ\rho$ which permutes $\mu_{q+1}$.  It turns out that $g(X)$ can always be written in infinitely many ways as $X^s A^{(q)}(1/X)/A(X)$ where $s\in\Z$ and $A(X)\in\F_{q^2}[X]$ has no roots in $\mu_{q+1}$.  If either $q$ is even or $s$ is odd then there exist positive integers $r$ such that $r\equiv s\pmod{q+1}$ and $\gcd(r,q-1)=1$, so that $X^r A(X^{q-1})$ permutes $\F_{q^2}$.  By applying this procedure to the most well-known permutation rational functions over $\F_q$, and using certain choices of $\rho(X)$, $\eta(X)$, and $A(X)$, one obtains huge classes of permutation polynomials over $\F_{q^2}$ which include as very special cases essentially all known permutation polynomials of the form $X^r B(X^{q-1})$.  We will elaborate on this remark in forthcoming joint papers with Zhiguo Ding.
\end{rmk}


\section{Connection with previous results}

In this section we explain the connection between Theorem~\ref{them} and previous results.  The combination of \cite[Cor.~3.8, 3.9, 3.12, and 3.13]{WYDM} is as follows.

\begin{prop}\label{WYDM}
Let $k,s,t,r$ be positive integers with $s$ odd and $t$ even, and write $q:=2^k$, $S:=2^s$, and $T:=2^t$.  If $r\equiv S+T\pmod{q+1}$ then
\[
g(X):=X^r\Bigl(X^{(S+T)(q-1)}+X^{T(q-1)}+1\Bigr)
\]
permutes\/ $\F_{q^2}$ if and only if $\gcd(r,q-1)=1$.
\end{prop}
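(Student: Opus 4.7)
The plan is to apply the machinery of Lemmas~\ref{old}, \ref{rewrite}, \ref{rho}, and \ref{Q1} exactly as in the proof of Theorem~\ref{them}, specialized to the case $\ell=s$, $m=t$ (so $\ell\not\equiv m\pmod{2}$), $Q=S$, $R=T$. Write $B(X):=X^{S+T}+X^T+1$, so that $g(X)=X^r B(X^{q-1})$. The ``only if'' direction of the equivalence is automatic from Lemma~\ref{old}. For the ``if'' direction, assume $\gcd(r,q-1)=1$. Since $r\equiv S+T\pmod{q+1}$, Lemma~\ref{rewrite} reduces matters to showing that $B(X)$ has no roots in $\mu_{q+1}$ and that
\[
h(X):=X^{S+T}\,\frac{B^{(q)}(1/X)}{B(X)}=\frac{X^{S+T}+X^S+1}{X^{S+T}+X^T+1}
\]
permutes $\mu_{q+1}$.

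The no-roots statement is handled by copying the argument from the proof of Theorem~\ref{them}: if $\alpha\in\mu_{q+1}$ satisfies $B(\alpha)=0$, then $\alpha^{S+T}B(\alpha)^q=\alpha^{S+T}B(\alpha^{-1})=0$ expands to $\alpha^S+\alpha^{S+T}+1=0$; subtracting this from $B(\alpha)=0$ yields $\alpha^{|S-T|}=1$. Since $s$ is odd and $t$ is even, $|s-t|$ is odd, so the standard formula $\gcd(2^a-1,2^k+1)=1$ when $v_2(a)\le v_2(k)$ gives $\gcd(S-T,q+1)=1$. Hence $\alpha=1$, contradicting $B(1)=1$.

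For the permutation claim for $h(X)$, Lemma~\ref{Q1} with the identifications above (noting $(-1)^t=1$ since $t$ is even) gives $h=\rho\circ X^{S+T}\circ\rho$. I then split on the parity of $k$ and invoke Lemma~\ref{rho}. If $k$ is even then $\rho$ permutes $\mu_{q+1}$; since $v_2(|s-t|)=0\ne v_2(k)$, the formula $\gcd(2^a+1,2^b+1)=1$ when $v_2(a)\ne v_2(b)$ yields $\gcd(S+T,q+1)=1$, so $X^{S+T}$ permutes $\mu_{q+1}$. If $k$ is odd then $\rho$ interchanges $\mu_{q+1}$ with $\bP^1(\F_q)$; since $v_2(k)=0=v_2(|s-t|)$, the formula $\gcd(2^a+1,2^b-1)=1$ when $v_2(b)\le v_2(a)$ yields $\gcd(S+T,q-1)=1$, so $X^{S+T}$ permutes $\bP^1(\F_q)$. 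Either way, $h$ permutes $\mu_{q+1}$, completing the proof.

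The main obstacle is the parity bookkeeping, but this is resolved by the single observation that the hypotheses ``$s$ odd, $t$ even'' force $|s-t|$ to be odd. This one fact simultaneously delivers all three gcd conditions (on $S-T$ for the no-roots check, and on $S+T$ modulo $q\pm1$ for the two parities of $k$), so the proof is a direct specialization of the argument used for Theorem~\ref{them}, with no new ideas required.
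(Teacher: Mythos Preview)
Your proof is correct. Note, however, that the paper does not give its own proof of Proposition~\ref{WYDM}: the proposition is simply quoted as the combination of \cite[Cor.~3.8, 3.9, 3.12, and 3.13]{WYDM}. What you have written is precisely the specialization of the paper's proof of Theorem~\ref{them} to the case $\ell=s$ odd, $m=t$ even, which is exactly what the paper asserts is possible when it remarks that ``the same approach can be used to deduce all the results mentioned above.'' So your argument is in complete harmony with the paper's method, even though there is no explicit proof in the paper to compare against.
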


This implies Theorem~\ref{them} in case $\ell$ is odd and $m$ is even, since if we put $s:=\ell$, $t:=m$, and $r:=d_2$ then the polynomial $g(X)$ in Proposition~\ref{WYDM} is congruent mod $X^{q^2}-X$ to the polynomial $f(X)$ in Theorem~\ref{them}.
Next suppose that $\ell$ is even and $m$ is odd, and put $s:=m$, $t:=\ell$, and $r\equiv -d_3-(Q+R)(q-1)\pmod{q^2-1}$.  Then one can check that the polynomials $g(X)$ from Proposition~\ref{WYDM} and $f(X)$ from Theorem~\ref{them} satisfy 
\[
f(X)\equiv g(X^{q^2-2})\pmod{X^{q^2}-X},
\]
so that $f(X)$ and $g(X)$ are multiplicatively equivalent.
%
%

Theorem~1 of \cite{LHnewtri} is as follows:

\begin{prop}\label{LH}
Let $k$ and $n$ be positive integers, and write $q:=2^k$ and $T:=2^n$.  Suppose that $\gcd(T-1,q+1)=1$, and let $r$ and $s$ be positive integers such that $r(T-1)\equiv T\pmod{q+1}$ and $s(T-1)\equiv -1\pmod{q+1}$.  Then $g(X):=X+X^{1+r(q-1)}+X^{1+s(q-1)}$ permutes\/ $\F_{q^2}$.
\end{prop}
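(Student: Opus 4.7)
The plan is to mirror the proof of Theorem~\ref{them}: reduce via Lemma~\ref{old} and Lemma~\ref{rewrite} to an equivalent rational-function permutation claim on $\mu_{q+1}$, and then express that rational function as $\rho\circ X^{\pm(T\pm1)}\circ\rho$ in order to invoke Lemma~\ref{rho}. Write $g(X)=X\cdot A(X^{q-1})$ with $A(X):=1+X^r+X^s\in\F_2[X]$. Since $\gcd(1,q-1)=1$, Lemma~\ref{old} reduces the claim to showing that $g_0(X):=X\cdot A(X)^{q-1}$ permutes $\mu_{q+1}$; and since $A^{(q)}=A$, Lemma~\ref{rewrite} reduces this in turn to showing that $A$ has no roots in $\mu_{q+1}$ together with the permutation property of the rational function identified below.

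Adding the two defining congruences, and cancelling $T-1$ using $\gcd(T-1,q+1)=1$, gives $r+s\equiv 1\pmod{q+1}$. Hence $x^{r+s}=x$ on $\mu_{q+1}$, and a short simplification (exactly analogous to the one in the proof of Theorem~\ref{them}) yields
\[
g_0(x)=\frac{x+x^r+x^s}{1+x^r+x^s}\qquad(x\in\mu_{q+1}).
\]
Since $\gcd(T-1,q+1)=1$, the monomial $X^{T-1}$ is a bijection of $\mu_{q+1}$, so $g_0$ permutes $\mu_{q+1}$ iff $g_0(X^{T-1})$ does; using $r(T-1)\equiv T$ and $s(T-1)\equiv-1$ and clearing denominators by $X$, this latter function is
\[
H(X):=\frac{1+X^T+X^{T+1}}{1+X+X^{T+1}}.
\]

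The central computation is a direct verification, parallel to Lemma~\ref{Q1}, of the identities
\[
H(X)=\rho\circ X^{T+1}\circ\rho\ \ (n\text{ odd}),\qquad 1/H(X)=\rho\circ X^{T-1}\circ\rho\ \ (n\text{ even}),
\]
which split according to whether $\omega^T=\omega^2$ ($n$ odd) or $\omega^T=\omega$ ($n$ even). Combining with Lemma~\ref{rho} gives the permutation property of $H$ from a monomial acting on either $\mu_{q+1}$ (if $k$ is even) or $\bP^1(\F_q)$ (if $k$ is odd), since $u\mapsto 1/u$ permutes $\mu_{q+1}$. To rule out roots of $A$ in $\mu_{q+1}$, one runs the same $\alpha^q=1/\alpha$ argument as in the proof of Theorem~\ref{them}: any such $\alpha$ forces $\alpha^2+\alpha+1=0$, whence $3\mid q+1$ and so $k$ is odd; the hypothesis then forces $n$ odd (otherwise $3\mid T-1$ and $3\mid q+1$), and a direct check shows $1+\omega+\omega^{T+1}=\omega\ne 0$ in that regime.

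The main obstacle is the arithmetic bookkeeping on gcds. In the case $k$ odd (hence $n$ odd) one needs $\gcd(T+1,q-1)=1$: if $e$ denotes the order of $2$ modulo a common prime divisor, then $e\mid 2n$ and $e\mid k$ force $e$ odd, hence $e\mid n$, contradicting $2^n\equiv-1$. In the case $k$ even, $n$ odd, one needs $\gcd(T+1,q+1)=1$, which follows from the standard fact that $\gcd(2^a+1,2^b+1)=2^{\gcd(a,b)}+1$ when $a/\gcd(a,b)$ and $b/\gcd(a,b)$ are both odd and equals $1$ otherwise, applied with $a=n$, $b=k$. These verifications are where the parity structure of the hypothesis is used most decisively; the rest of the argument is otherwise a template application of the Lemma~\ref{old}/\ref{rewrite}/\ref{rho} machinery.
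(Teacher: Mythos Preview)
The paper does not itself prove Proposition~\ref{LH}; it quotes the result from \cite{LHnewtri} and then shows that every polynomial in Theorem~\ref{them} is multiplicatively equivalent to one in Proposition~\ref{LH}.  Your approach---running Lemmas~\ref{old}, \ref{rewrite}, and \ref{rho} together with the $R=1$ analogue of Lemma~\ref{Q1}, exactly as in the paper's proof of Theorem~\ref{them}---is the natural direct argument, and it is essentially correct.

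Two small repairs are needed.  First, your no-roots argument for $A(X)=1+X^r+X^s$ on $\mu_{q+1}$ is misstated: the $\alpha^q=1/\alpha$ trick does not yield $\alpha^2+\alpha+1=0$.  From $1+\alpha^r+\alpha^s=0$ and its $q$-th power (multiplied through by $\alpha^{r+s}$) one gets $\alpha^{r+s}+\alpha^r+\alpha^s=0$; since $r+s\equiv 1\pmod{q+1}$ this reads $\alpha+\alpha^r+\alpha^s=0$, and adding the two equations gives $\alpha=1$, contradicting $A(1)=1\ne 0$.  The detour through $\omega$ and the parity of $k,n$ is unnecessary.  Second, your gcd bookkeeping only treats $n$ odd.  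When $n$ is even, the hypothesis $\gcd(T-1,q+1)=1$ forces $k$ even (otherwise $3$ divides both $T-1$ and $q+1$); then $\rho$ permutes $\mu_{q+1}$ by Lemma~\ref{rho}, and your identity $1/H=\rho\circ X^{T-1}\circ\rho$ shows $H$ permutes $\mu_{q+1}$ precisely because $\gcd(T-1,q+1)=1$ is the standing hypothesis.  With these two fixes your proof is complete and parallels the paper's proof of Theorem~\ref{them}.
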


\begin{rmk}
The statement of \cite[Thm.~1]{LHnewtri} has the additional hypothesis $n<k$, but that hypothesis is not used in the proof of that result.
\end{rmk}

We now show that all the permutation polynomials in Theorem~\ref{them} are multiplicatively equivalent to permutation polynomials in Proposition~\ref{LH}.  Assume the hypotheses of Theorem~\ref{them}.  Replace $\ell$ by $\ell+2ki$ for a positive integer $i$ which is large enough so that $Q>R$; note that this replacement does not change the congruence class of $Q$ mod $(q^2-1)$, and hence does not affect the truth of the hypotheses or conclusion of Theorem~\ref{them}, while also not affecting the multiplicative equivalence class of the permutation polynomial $f(X)$ in Theorem~\ref{them}.
Write $n:=\ell-m$ and $T:=2^n$, so that the hypotheses of Theorem~\ref{them} imply that $\gcd(T-1,q+1)=1$.  Writing $v:=d_1$, we see that the polynomial $g(X)$ in Proposition~\ref{LH} satisfies \[
g(X^v)=X^v+X^{v+rv(q-1)}+X^{v+sv(q-1)}.
\]
Since $v\equiv Q-R\pmod{q+1}$, we have
\begin{align*}
rv(q-1)&\equiv r(Q-R)(q-1)\pmod{q^2-1}\\
&=r(T-1)R(q-1)\\
&\equiv TR(q-1)\pmod{q^2-1}\\
&=Q(q-1),
\end{align*}
and likewise
\[
sv(q-1)\equiv -R(q-1)\pmod{q^2-1}.
\]
It follows that the polynomial $f(X)$ in Theorem~\ref{them} satisfies 
\[
g(X^v)\equiv f(X)\pmod{X^{q^2}-X},
\]
so that $f(X)$ and $g(X)$ are multiplicatively equivalent.

Each of the results \cite[Thm.~4.2]{BQ}, \cite[Thm.~3.2]{ZKP}, and \cite[Thm.~1.1]{ZLKPT} generalizes \cite[Thm.~1]{LHnewtri}, and hence includes special cases that are multiplicatively equivalent to each of the permutation polynomials in Theorem~\ref{them}.  If $k$ is odd and $\gcd(2k,\ell-m)=1$ then the same is true of \cite[Thm.~1]{LXZ}.



\end{document}